\newtheorem{theorem}{Theorem}[section]
\newtheorem{corollary}{Corollary}[section]
\newtheorem{example}{Example}[section]
\newtheorem{lemma}{Lemma}[section]
\newenvironment{proof}[1][Proof]{\noindent\textbf{#1.} }{\ \rule{0.5em}{0.5em}}
\begin{document}

\title{On Fillmore's theorem extended by Borobia\thanks{%
Supported by Fondecyt 1170313, Chile; Conicyt-PAI 79160002, 2016, Chile.}}
\author{Ana I. Julio, Ricardo L. Soto\thanks{%
E-mail addresses: rsoto@ucn.cl (R.L. Soto), ajulio@ucn.cl (A.I. Julio)} \\
{\small Dpto. Matem\'{a}ticas, Universidad Cat\'{o}lica del Norte, Casilla
1280}\\
{\small Antofagasta, Chile.}}
\date{}
\maketitle

\begin{abstract}
Fillmore Theorem says that if $A$ is an $n\times n$ complex non-scalar
matrix and $\gamma _{1},\ldots ,\gamma _{n}$ are complex numbers with $%
\gamma _{1}+\cdots +\gamma _{n}=trA,$ then there exists a matrix $B$ similar
to $A$ with diagonal entries $\gamma _{1},\ldots ,\gamma _{n}.$ Borobia
simplifies this result and extends it to matrices with integer entries.
Fillmore and Borobia do not consider the nonnegativity hypothesis. Here, we
introduce a different and very simple way to compute the matrix $B$ similar
to $A$ with diagonal $\gamma _{1},\ldots ,\gamma _{n}.$ Moreover, we
consider the nonnegativity hypothesis and we show that for a list $\Lambda
=\{\lambda _{1},\ldots ,\lambda _{n}\}$ of complex numbers of Suleimanova or 
\v{S}migoc type, and a given list $\Gamma =\{\gamma _{1},\ldots ,\gamma
_{n}\}$ of nonnegative real numbers, the remarkably simple condition $\gamma
_{1}+\cdots +\gamma _{n}=\lambda _{1}+\cdots +\lambda _{n}$ is necessary and
sufficient for the existence of a nonnegative matrix with spectrum $\Lambda $
and diagonal entries $\Gamma .$ This surprising simple result improves a
condition recently given by Ellard and \v{S}migoc in arXiv:.1702.02650v1.
\end{abstract}

\textit{AMS classification: \ \ 15A18, 15A29}

\textit{Key words: Inverse eigenvalue problem, nonnegative matrix, diagonal
entries.}

\section{Introduction}

\noindent The problem of the existence and construction of nonnegative
matrices with prescribed eigenvalues and diagonal entries is an important
inverse problem, interesting by itself but also necessary to apply a
perturbation result, due to R. Rado and published by H. Perfect \cite%
{Perfect1}, which has played an important role in the study of the
nonnegative inverse eigenvalue problem and the nonnegative inverse
elementary divisors problem. Fillmore \cite{Fillmore} proved that if $A$ is
an $n\times n$ non-scalar matrix over a field $\mathbb{F},$ and $\gamma
_{1},\ldots ,\gamma _{n}\in \mathbb{F}$ with $\sum\limits_{i=1}^{n}\gamma
_{i}=trA,$ then there exists a matrix $B$ similar to $A$ having diagonal
entries $\gamma _{1},\ldots ,\gamma _{n}.$ Borobia \cite{Borobia} develops
an explicit and simple algorithm to compute the matrix $B,$ and then extends
Fillmore Theorem to matrices with integer entries. Results of Fillmore and
Borobia do not consider the nonnegativity hypothesis. Here, by applying a
well known result of Brauer \cite{Brauer}, we present a different and very
simple way to compute the matrix $B$ similar to $A$ with arbitrarily
prescribed diagonal entries $\gamma _{1},\ldots ,\gamma _{n}$ (except for
the trace property). In this work we also consider the nonnegativity
hypothesis and we prove that given a realizable list of complex numbers $%
\Lambda =\{\lambda _{1},\lambda _{2},\ldots ,\lambda _{n}\},$ $\lambda _{1}$
being the Perron eigenvalue, with%
\begin{equation}
i)\ \mathcal{F}=\{\lambda _{i}\in \mathbb{C}:Re\lambda _{i}\leq 0,\text{ }%
\left\vert Re\lambda _{i}\right\vert \geq \left\vert Im\lambda
_{i}\right\vert \ i=2,\ldots ,n\},  \label{Co1}
\end{equation}%
or%
\begin{equation}
ii)\ \mathcal{G}=\{\lambda _{i}\in \mathbb{C}:Re\lambda _{i}\leq 0,\text{ }%
\left\vert \sqrt{3}Re\lambda _{i}\right\vert \geq \left\vert Im\lambda
_{i}\right\vert ,\text{ }i=2,\ldots ,n,\}  \label{Co2}
\end{equation}%
and given real nonnegative numbers $\gamma _{1},\ldots ,\gamma _{n},$ such
that 
\begin{equation}
\sum\limits_{i=1}^{n}\gamma _{i}=\sum\limits_{i=1}^{n}\lambda _{i},
\label{Co3}
\end{equation}%
then there exists an $n\times n$ nonnegative matrix $B$ with spectrum $%
\Lambda $ and diagonal entries $\gamma _{1},\ldots ,\gamma _{n}$ if and only
if (\ref{Co3}) holds. It is well known that under condition (\ref{Co3}),
lists with the property $i)$ (lists of Suleimanova type) and list with the
property $ii)$ (lists of \v{S}migoc type) are realizable. The novelty here
is that, under the remarkably simple condition (\ref{Co3}), $\Lambda $ is
not only realizable, but it is realizable with arbitrary prescribed diagonal
entries. This surprising simple result improves, for lists $i)$ and $ii),$
the necessary and sufficient condition given by Ellard and \v{S}migoc in 
\cite{Ellard}.

\bigskip

\noindent The set of all matrices with constant row sums equal to $\alpha $
will be denoted by $\mathcal{CS}_{\alpha }.$ It is clear that $\mathbf{e}%
=[1,1,\ldots ,1]^{T}$ is an eigenvector of any matrix $A\in \mathcal{CS}%
_{\alpha },$ corresponding to the eigenvalue $\alpha .$ The importance of
matrices with constant row sums is due to the well known fact that if $%
\lambda _{1}$ is the desired Perron eigenvalue, then the problem of finding
a nonnegative matrix with spectrum $\Lambda =\{\lambda _{1},\lambda
_{2},\ldots ,\lambda _{n}\}$ is equivalent to the problem of finding a
nonnegative matrix in $\mathcal{CS}_{\lambda _{1}}$ with spectrum $\Lambda .$
We denote by $E_{i,j}$ the matrix with one in position $(i,j)$ and zeros
elsewhere. We shall say that the list $\Lambda =\{\lambda _{1},\lambda
_{2},\ldots ,\lambda _{n}\}$ is realizable if there is a nonnegative matrix $%
A$ with spectrum $\Lambda .$ In this case we say that $A$ is the realizing
matrix. Our main tools will be:

\begin{theorem}
\cite{Brauer}\label{Bra}Let $A$ be an $n\times n$ arbitrary matrix with
eigenvalues\newline
$\lambda _{1},\lambda _{2},\ldots ,\lambda _{n}.$ Let $\mathbf{v}=\left[
v_{1},v_{2},\ldots ,v_{n}\right] ^{T}$ an eigenvector of $A$ associated with
the eigenvalue $\lambda _{k}$ and let $\mathbf{q}$ be any $n-$dimensional
vector. Then the matrix $A+\mathbf{vq}^{T}$ has eigenvalues $\lambda
_{1},\ldots ,\lambda _{k-1},\lambda _{k}+\mathbf{v}^{T}\mathbf{q},\lambda
_{k+1},\ldots ,\lambda _{n},$ $k=1,2,\ldots ,n.$
\end{theorem}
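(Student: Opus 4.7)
The plan is to reduce the problem to a block-triangular computation by means of a cleverly chosen similarity. First I would extend the eigenvector $\mathbf{v}$ to a basis of $\mathbb{C}^{n}$ and let $P$ be the invertible matrix whose first column is $\mathbf{v}$. Since $A\mathbf{v}=\lambda _{k}\mathbf{v}$, we have $P^{-1}AP\, e_{1}=\lambda _{k}e_{1}$, so
\[
P^{-1}AP=\begin{pmatrix} \lambda _{k} & \mathbf{r}^{T} \\ \mathbf{0} & A' \end{pmatrix}
\]
for some row vector $\mathbf{r}^{T}$ and some $(n-1)\times (n-1)$ matrix $A'$. Because this matrix is similar to $A$, its characteristic polynomial equals $\prod_{i}(x-\lambda _{i})$, and block-triangularity then forces the spectrum of $A'$ to be exactly $\{\lambda _{1},\ldots ,\lambda _{k-1},\lambda _{k+1},\ldots ,\lambda _{n}\}$.

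Next I would conjugate the perturbed matrix by the same $P$. Using $P^{-1}\mathbf{v}=e_{1}$, we obtain
\[
P^{-1}\bigl(A+\mathbf{v}\mathbf{q}^{T}\bigr)P=P^{-1}AP+e_{1}\bigl(\mathbf{q}^{T}P\bigr),
\]
and the additive correction $e_{1}(\mathbf{q}^{T}P)$ only alters the first row. In particular the zero block beneath the $(1,1)$ entry is preserved, and the $(1,1)$ entry is shifted from $\lambda _{k}$ to $\lambda _{k}+(\mathbf{q}^{T}P)_{1}=\lambda _{k}+\mathbf{q}^{T}\mathbf{v}$. The conjugated matrix therefore has the form
\[
\begin{pmatrix} \lambda _{k}+\mathbf{q}^{T}\mathbf{v} & * \\ \mathbf{0} & A' \end{pmatrix},
\]
whose spectrum can be read off as $\lambda _{k}+\mathbf{v}^{T}\mathbf{q}$ together with the eigenvalues of $A'$, giving precisely the asserted list.

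There is no serious obstacle here; the whole argument is bookkeeping once one chooses $P$ to place $\mathbf{v}$ in the first column. The only point worth checking carefully is that $\mathbf{v}^{T}\mathbf{q}$ and $\mathbf{q}^{T}\mathbf{v}$ agree as scalars, which is immediate. An alternative route would compute the characteristic polynomial directly via the matrix-determinant lemma, writing $\det (A+\mathbf{v}\mathbf{q}^{T}-xI)=\det (A-xI)\bigl(1+\mathbf{q}^{T}(A-xI)^{-1}\mathbf{v}\bigr)$ and using $(A-xI)^{-1}\mathbf{v}=(\lambda _{k}-x)^{-1}\mathbf{v}$ to cancel the factor $(\lambda _{k}-x)$ and replace it by $(\lambda _{k}+\mathbf{q}^{T}\mathbf{v}-x)$. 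I would prefer the similarity proof as it is more structural and does not require worrying about the case $x=\lambda _{k}$.
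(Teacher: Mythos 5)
Your proof is correct. The paper itself gives no proof of this theorem --- it is quoted from Brauer's 1952 paper as a known tool --- and your similarity argument (conjugating by a basis extension of $\mathbf{v}$ so that the rank-one update only touches the first row of a block upper-triangular form) is the standard, complete proof of Brauer's theorem; the determinant-lemma route you sketch as an alternative is also valid once one argues by polynomial identity to dispose of the point $x=\lambda_{k}$.
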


\begin{lemma}
\cite{Soto3}\label{SoCa}Let $A\in \mathcal{CS}_{\lambda _{1}}$ with Jordan
canonical form $J(A).$ Let $\mathbf{q}=\left[ q_{1},\ldots ,q_{n}\right]
^{T} $ an arbitrary $n-$dimensional vector such that $\lambda
_{1}+\sum\limits_{i=1}^{n}q_{i}\neq \lambda _{i},$ $i=2,\ldots ,n.$ Then the
matrix $A+\mathbf{eq}^{T}$ has Jordan form $J(A)+\left(
\sum\limits_{i=1}^{n}q_{i}\right) E_{11}.$ In particular, if $%
\sum\limits_{i=1}^{n}q_{i}=0,$ then $A$ and $A+\mathbf{eq}^{T}$ are similar.
\end{lemma}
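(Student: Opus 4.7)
The plan is to combine Brauer's theorem (Theorem~\ref{Bra}) with an explicit unipotent similarity in a Jordan basis of $A$ adapted to $\mathbf{e}$. Setting $s:=\sum_{i=1}^{n}q_{i}$, I would begin by noting that $A\in\mathcal{CS}_{\lambda_{1}}$ makes $\mathbf{e}$ an eigenvector of $A$ for $\lambda_{1}$, so Theorem~\ref{Bra} immediately supplies the spectrum $\{\lambda_{1}+s,\lambda_{2},\ldots,\lambda_{n}\}$ for $A+\mathbf{e}\mathbf{q}^{T}$, with the hypothesis ensuring $\lambda_{1}+s$ is distinct from $\lambda_{2},\ldots,\lambda_{n}$. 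I would then extend $\mathbf{e}$ to a Jordan basis of $A$ by placing the Jordan chain starting at $\mathbf{e}$ first, producing an invertible $P$ with $P\mathbf{e}_{1}=\mathbf{e}$ and $P^{-1}AP=J(A)$ (here $\mathbf{e}_{1}$ is the first standard basis vector). Conjugating yields
\[
P^{-1}(A+\mathbf{e}\mathbf{q}^{T})P \;=\; J(A)+\mathbf{e}_{1}\mathbf{r}^{T},\qquad \mathbf{r}^{T}:=\mathbf{q}^{T}P,\ \ r_{1}=\mathbf{q}^{T}\mathbf{e}=s.
\]

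The technical heart of the proof would be showing that $M:=J(A)+\mathbf{e}_{1}\mathbf{r}^{T}$ is similar to $J(A)+sE_{11}$. I would try the unipotent similarity $N=I+\mathbf{e}_{1}\mathbf{c}^{T}$ with $\mathbf{c}^{T}=(0,c_{2},\ldots,c_{n})$, whose inverse is $I-\mathbf{e}_{1}\mathbf{c}^{T}$ thanks to $\mathbf{c}^{T}\mathbf{e}_{1}=0$. Using $M\mathbf{e}_{1}=(\lambda_{1}+s)\mathbf{e}_{1}$ and $\mathbf{c}^{T}M=\mathbf{c}^{T}J(A)$ (both from $\mathbf{c}^{T}\mathbf{e}_{1}=0$), expanding $N^{-1}MN=J(A)+sE_{11}$ reduces to the linear system
\[
\mathbf{c}^{T}\bigl(J(A)-(\lambda_{1}+s)I\bigr)\;=\;(0,r_{2},r_{3},\ldots,r_{n}),\qquad c_{1}=0.
\]
Since $J(A)-(\lambda_{1}+s)I$ is upper triangular with diagonal entries $\lambda_{j}-(\lambda_{1}+s)$ that are nonzero for $j\geq 2$ by hypothesis, forward substitution from row $2$ determines $c_{2},\ldots,c_{n}$ uniquely once $c_{1}$ is fixed. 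The row-$1$ equation reads $-s\,c_{1}=0$: when $s\neq 0$ this forces $c_{1}=0$, and when $s=0$ the hypothesis makes $\lambda_{1}$ simple, so the $(1,1)$-entry $-s$ vanishes and the equation reduces to $0=0$ (the right-hand side's first coordinate being zero by construction), leaving $c_{1}=0$ as a valid free choice. Either way a valid $N$ exists, giving the claimed similarity; when $s=0$ the right-hand side is simply $J(A)$, recovering the ``in particular'' statement.

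The step I expect to be most delicate is the $s=0$ case, where $J(A)-(\lambda_{1}+s)I$ is singular. The argument hinges on the observation that the first entry of the right-hand side of the linear system vanishes by construction, which is exactly the identity $r_{1}=s$ coming from $\mathbf{q}^{T}\mathbf{e}=\sum_{i}q_{i}$; this, together with the fact that the hypothesis forces $\lambda_{1}\neq\lambda_{j}$ for $j\geq 2$ and hence keeps the remaining diagonal entries of $J(A)-\lambda_{1}I$ nonzero, makes the forward substitution go through uniformly in both cases.
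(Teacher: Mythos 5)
The paper does not prove Lemma~\ref{SoCa}; it is cited from \cite{Soto3}, so there is no in-paper argument to compare against and your proof must be judged on its own. On its own it is correct. Conjugating by a Jordan basis $P$ with $P\mathbf{e}_{1}=\mathbf{e}$ does collapse the rank-one update to the first row of $J(A)$, and your unipotent $N=I+\mathbf{e}_{1}\mathbf{c}^{T}$ with $c_{1}=0$ is the right gadget: you correctly use $\mathbf{c}^{T}\mathbf{e}_{1}=0$ to get $N^{-1}=I-\mathbf{e}_{1}\mathbf{c}^{T}$ and to kill the quadratic term, the reduction to $\mathbf{c}^{T}\bigl(J(A)-(\lambda_{1}+s)I\bigr)=(0,r_{2},\ldots,r_{n})$ checks out, and the identity $r_{1}=\mathbf{q}^{T}P\mathbf{e}_{1}=\mathbf{q}^{T}\mathbf{e}=s$ is exactly what makes the first coordinate of the right-hand side vanish so that $c_{1}=0$ is consistent in both the $s\neq 0$ case (where it is forced) and the $s=0$ case (where it is a free choice). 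The back-substitution on columns $2,\ldots,n$ works because the hypothesis $\lambda_{1}+s\neq\lambda_{i}$, $i\geq2$, keeps those diagonal entries of $J(A)-(\lambda_{1}+s)I$ nonzero.

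One small caveat, which is really a wrinkle in the lemma's wording rather than in your argument: you prove that $A+\mathbf{e}\mathbf{q}^{T}$ is \emph{similar} to $J(A)+sE_{11}$, but the latter is the Jordan \emph{canonical} form only if the $(1,2)$-entry of $J(A)$ is $0$, i.e., only if the Jordan block whose chain starts at $\mathbf{e}$ has size one. That is guaranteed when $s=0$ (the hypothesis then makes $\lambda_{1}$ simple), which is the only case the present paper uses; but if $s\neq 0$ and $\lambda_{1}$ is repeated, $\mathbf{e}$ may head a block of size $\geq 2$, and then $J(A)+sE_{11}$ carries a spurious off-diagonal $1$ and is merely similar to the true Jordan form. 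You do not address this, but the lemma as stated does not either, so I would not count it against you; it is worth a one-line remark in the write-up.
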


\begin{lemma}
\label{perfect}\cite{Soto25} The complex numbers $\lambda _{1},\lambda
_{2},\lambda _{3}$, $\lambda _{1}\geq |\lambda _{i}|$, $i=2,3$ and $\gamma
_{1},\gamma _{2},\gamma _{3}$ are, respectively, the eigenvalues and the
diagonal entries of a $3\times 3$ nonnegative matrix $B\in \mathcal{CS}%
_{\lambda _{1}}$ if only if 
\begin{align*}
& i)\ 0\leq \gamma _{k}\leq \lambda _{1},\ k=1,2,3 \\
& ii)\ \gamma _{1}+\gamma _{2}+\gamma _{3}=\lambda _{1}+\lambda _{2}+\lambda
_{3} \\
& iii)\ \gamma _{1}\gamma _{2}+\gamma _{1}\gamma _{3}+\gamma _{2}\gamma
_{3}\geq \lambda _{1}\lambda _{2}+\lambda _{1}\lambda _{3}+\lambda
_{2}\lambda _{3} \\
& iv)\ \max_{k}\gamma _{k}\geq Re\lambda _{2}
\end{align*}
\end{lemma}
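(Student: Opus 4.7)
The plan is to prove the two directions separately, treating necessity of $i)$--$iii)$ as routine and concentrating the real work on the necessity of $iv)$ and on sufficiency.

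For necessity, let $B=(b_{ij})\in \mathcal{CS}_{\lambda_{1}}$ be nonnegative with eigenvalues $\lambda_{1},\lambda_{2},\lambda_{3}$ and diagonal entries $\gamma_{1},\gamma_{2},\gamma_{3}$. Condition $i)$ is immediate since each $\gamma_{k}$ is one entry of a nonnegative row summing to $\lambda_{1}$. Condition $ii)$ is the trace identity. For $iii)$, I would identify the second elementary symmetric function of the eigenvalues with the sum of the three $2\times 2$ principal minors of $B$,
\[
\lambda_{1}\lambda_{2}+\lambda_{1}\lambda_{3}+\lambda_{2}\lambda_{3}=\sum_{i<j}\bigl(\gamma_{i}\gamma_{j}-b_{ij}b_{ji}\bigr),
\]
and then use $b_{ij}b_{ji}\ge 0$. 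For $iv)$, when $\lambda_{2},\lambda_{3}$ are complex conjugates one has $\lambda_{2}+\lambda_{3}=2Re\lambda_{2}$; combined with $ii)$ and the bound $\gamma_{1}\le \lambda_{1}$ from $i)$, this gives $\gamma_{2}+\gamma_{3}\ge 2Re\lambda_{2}$, hence $\max_{k}\gamma_{k}\ge Re\lambda_{2}$. When $\lambda_{2},\lambda_{3}$ are real the inequality is trivial for $\lambda_{2}\le 0$, and for $\lambda_{2}>0$ I would evaluate the characteristic polynomial $p(x)=(x-\lambda_{1})(x-\lambda_{2})(x-\lambda_{3})$ at $x=\max_{k}\gamma_{k}$ and use the coefficient identities coming from $ii)$ and $iii)$ to force a contradiction whenever $\max_{k}\gamma_{k}<\lambda_{2}$.

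For sufficiency, after relabelling so that $\gamma_{1}=\max_{k}\gamma_{k}$, I would parametrize a candidate matrix in $\mathcal{CS}_{\lambda_{1}}$ with the prescribed diagonal by
\[
B=\begin{pmatrix}\gamma_{1} & a_{12} & (\lambda_{1}-\gamma_{1})-a_{12}\\ a_{21} & \gamma_{2} & (\lambda_{1}-\gamma_{2})-a_{21}\\ a_{31} & (\lambda_{1}-\gamma_{3})-a_{31} & \gamma_{3}\end{pmatrix},
\]
whose row sums and trace are correct by construction and by $ii)$. Matching the remaining two coefficients of the characteristic polynomial against the elementary symmetric functions of $\lambda_{1},\lambda_{2},\lambda_{3}$ produces a system of two polynomial equations in the three parameters $a_{12},a_{21},a_{31}$: condition $iii)$ makes the target of $a_{12}a_{21}+a_{13}a_{31}+a_{23}a_{32}$ nonnegative, while $iv)$ supplies the slack needed to solve the determinant equation inside the admissible box $0\le a_{ij}\le \lambda_{1}-\gamma_{i}$. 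A concrete route is to fix one parameter at a boundary of this box (for instance $a_{31}=0$ or $a_{31}=\lambda_{1}-\gamma_{3}$), reducing the system to a single quadratic whose discriminant is nonnegative by $iii)$ and whose admissible root is guaranteed by $iv)$.

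The main obstacle is the explicit sufficiency construction: one must exhibit a feasible configuration of the $a_{ij}$ rather than rely on a dimension count, and verify that every intermediate quantity stays in the admissible range precisely when $iii)$ and $iv)$ hold simultaneously. A secondary difficulty is the clean derivation of necessity of $iv)$ in the positive real-eigenvalue case, where the inequality does not follow by arithmetic alone and must be extracted from sign analysis of the characteristic polynomial together with $iii)$.
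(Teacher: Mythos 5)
First, note that the paper does not prove this lemma at all: it is quoted verbatim from \cite{Soto25} (it is essentially Perfect's $3\times 3$ theorem extended to a complex conjugate pair), so there is no in-paper proof to compare against. Judged on its own, your proposal is a plan rather than a proof, and the two places you yourself flag as ``the main obstacle'' are exactly where the content of the lemma lives. Your necessity arguments for $i)$, $ii)$, $iii)$ and for $iv)$ in the conjugate case are correct and complete (the principal-minor identity and the averaging argument both work). But the necessity of $iv)$ for real $\lambda_{2}>0$ is left as an intention; the workable version of your idea is not a coefficient manipulation but a sign count on the determinant expansion: with $g=\max_{k}\gamma_{k}$ one has
\begin{equation*}
\det(gI-B)=\prod_{k}(g-\gamma_{k})-\sum_{i<j}b_{ij}b_{ji}(g-\gamma_{k'})-\bigl(b_{12}b_{23}b_{31}+b_{13}b_{32}b_{21}\bigr)\leq 0,
\end{equation*}
since the first product vanishes and every correction term is nonpositive; then $(g-\lambda_{1})(g-\lambda_{2})(g-\lambda_{3})\leq 0$ with $g<\lambda_{2}\leq\lambda_{1}$ forces $g\leq\lambda_{3}$, which contradicts the trace identity unless all quantities coincide. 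You should carry this out explicitly.

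The larger gap is sufficiency. Your three-parameter matrix and the appeal to ``a quadratic whose discriminant is nonnegative by $iii)$'' do not reflect how the construction actually closes: because $B\in\mathcal{CS}_{\lambda_{1}}$ already pins down the eigenvalue $\lambda_{1}$, one only needs to match the trace and the second symmetric function, and with the zero pattern used in Lemma \ref{caso 3} of this very paper, namely
\begin{equation*}
B=\begin{bmatrix}\gamma _{1} & 0 & \lambda _{1}-\gamma _{1} \\ \lambda _{1}-\gamma _{2}-p & \gamma _{2} & p \\ 0 & \lambda _{1}-\gamma _{3} & \gamma _{3}\end{bmatrix},\qquad p=\frac{e_{2}(\gamma)-e_{2}(\lambda)}{\lambda _{1}-\gamma _{3}},
\end{equation*}
the equation for the one free parameter $p$ is \emph{linear}, and $iii)$ gives $p\geq 0$ directly with no discriminant in sight. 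What remains --- and what your sketch never addresses --- is the inequality $\lambda_{1}-\gamma_{2}-p\geq 0$, which is the only place condition $iv)$ enters on the sufficiency side; establishing it requires choosing the right assignment of the ordered $\gamma_{k}$ to the rows and a genuine computation combining $i)$, $ii)$ and $iv)$, plus separate treatment of the degenerate case $\gamma_{3}=\lambda_{1}$. Until that inequality is proved, the sufficiency direction is not established.
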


\begin{lemma}
\label{Smigoc}\cite{smigoc} Let $A=%
\begin{bmatrix}
A_{1} & \mathbf{a} \\ 
\mathbf{b}^{T} & c%
\end{bmatrix}%
$ be an $n\times n$ matrix and let $B$ be an $m\times m$ matrix with Jordan
canonical form $J(B)=%
\begin{bmatrix}
c & \mathbf{0} \\ 
\mathbf{0} & I(B)%
\end{bmatrix}%
$. Then the matrix 
\begin{equation*}
C=%
\begin{bmatrix}
A_{1} & \mathbf{at}^{T} \\ 
\mathbf{sb}^{T} & B%
\end{bmatrix}%
,B\mathbf{s}=c\mathbf{s},\ \mathbf{t}^{T}B=c\mathbf{t}^{T},\text{with}\ 
\mathbf{t}^{T}\mathbf{s}=1
\end{equation*}%
has Jordan form 
\begin{equation*}
J(C)=%
\begin{bmatrix}
J(A) & 0 \\ 
0 & I(B)%
\end{bmatrix}%
.
\end{equation*}
\end{lemma}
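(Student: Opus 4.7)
The plan is to exhibit a similarity that brings $C$ into block-diagonal form $A \oplus I(B)$, from which the stated Jordan form follows immediately.

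First, I would use the assumption $J(B) = c \oplus I(B)$ to produce an invertible $m \times m$ matrix $P$ with $P^{-1} B P = J(B)$ whose first column is $\mathbf{s}$ and whose inverse $P^{-1}$ has first row $\mathbf{t}^{T}$. The first column of any such Jordan-basis matrix must be a right eigenvector of $B$ for the isolated $1 \times 1$ block $[c]$, and the first row of $P^{-1}$ must be the corresponding left eigenvector; the normalization $\mathbf{t}^{T} \mathbf{s} = 1$ is precisely what makes these two choices mutually compatible (ensuring $(P^{-1} P)_{11} = 1$ after the rescalings are reconciled).

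Next, I would apply the block similarity $S = I_{n-1} \oplus P$ to $C$. Using $\mathbf{t}^{T} P = \mathbf{e}_1^{T}$ and $P^{-1} \mathbf{s} = \mathbf{e}_1$, a direct block computation yields
\[
S^{-1} C S = \begin{bmatrix} A_{1} & \mathbf{a}\,\mathbf{e}_1^{T} \\ \mathbf{e}_1\,\mathbf{b}^{T} & J(B) \end{bmatrix}.
\]
Repartitioning this $(n-1) + m$ block decomposition as an $n + (m-1)$ block decomposition, the top-left $n \times n$ block is exactly $\begin{bmatrix} A_{1} & \mathbf{a} \\ \mathbf{b}^{T} & c \end{bmatrix} = A$, the bottom-right $(m-1) \times (m-1)$ block is $I(B)$, and the new off-diagonal coupling blocks vanish because $J(B)$ has the structure $c \oplus I(B)$ and the original couplings $\mathbf{a}\mathbf{e}_1^{T}$, $\mathbf{e}_1 \mathbf{b}^{T}$ are supported only on the column and row corresponding to the isolated $[c]$. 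Thus $S^{-1} C S = A \oplus I(B)$, whose Jordan form is $J(A) \oplus I(B)$, as claimed.

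The main technical point lies in the first step: arranging that $\mathbf{s}$ and $\mathbf{t}^{T}$ both sit inside the same Jordan basis for $B$, with $\mathbf{s}$ playing the role of the first basis column and $\mathbf{t}^{T}$ that of the first row of its inverse. The condition $\mathbf{t}^{T} \mathbf{s} = 1$ is indispensable here, since without it the cancellations $\mathbf{t}^{T} P = \mathbf{e}_1^{T}$ and $P^{-1} \mathbf{s} = \mathbf{e}_1$ would acquire nontrivial scalars and the coupling blocks would not align to produce a clean decoupling. Once $P$ is fixed, the remainder is routine block-matrix algebra.
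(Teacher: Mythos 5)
The paper does not prove this lemma --- it is quoted from \v{S}migoc's 2004 paper --- so there is no in-text proof to compare against; your argument is essentially the standard one, and the block computation at its core is correct: once a $P$ with $P^{-1}BP=J(B)$, first column $\mathbf{s}$, and first row of $P^{-1}$ equal to $\mathbf{t}^{T}$ is in hand, conjugating $C$ by $I_{n-1}\oplus P$ and repartitioning does yield $A\oplus I(B)$, hence the stated Jordan form. The one step you flag as "the main technical point" is also the one you do not actually prove, and your heuristic for it is too weak in general. If $c$ has geometric multiplicity greater than one in $B$ (the hypotheses allow $I(B)$ to contain further blocks with eigenvalue $c$), then $\mathbf{s}$ and $\mathbf{t}$ are not determined up to scalars, so you cannot obtain the required $P$ merely by "reconciling rescalings" of the first column and first row of an arbitrary Jordan basis. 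The clean argument is: since $\mathbf{t}^{T}\mathbf{s}=1$, one has $\mathbb{C}^{m}=\mathrm{span}(\mathbf{s})\oplus\ker\mathbf{t}^{T}$, and both summands are $B$-invariant (if $\mathbf{t}^{T}x=0$ then $\mathbf{t}^{T}Bx=c\,\mathbf{t}^{T}x=0$); the restriction of $B$ to $\ker\mathbf{t}^{T}$ must have Jordan form $I(B)$ by uniqueness of the Jordan decomposition, and taking $P=[\,\mathbf{s}\mid W\,]$ with $W$ a Jordan basis of $\ker\mathbf{t}^{T}$ automatically makes $\mathbf{t}^{T}$ the first row of $P^{-1}$, since $\mathbf{t}^{T}$ is the unique functional sending $\mathbf{s}$ to $1$ and annihilating the columns of $W$. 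With that paragraph inserted, your proof is complete.
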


\section{The general case}

\noindent In this section, by applying Theorem \ref{Bra}, we introduce a
very simple way to compute a matrix $B$ similar to a given non-sacalar
matrix $A,$ with $B$ having diagonal entries $\gamma _{1},\ldots
,\gamma_{n}. $ Of course, we need that $\sum\limits_{i=1}^{n}\gamma
_{i}=trA. $ Then we have:

\begin{theorem}
Let $A=\left( a_{ij}\right) $ be an $n\times n$ non-scalar complex matrix
and let $\Gamma =\{\gamma _{1},\ldots ,\gamma _{n}\}$ be a given list of
complex numbers such that $\sum\limits_{i=1}^{n}\gamma _{i}=trA.$ Let $%
\mathbf{x}=\left[ x_{1},\ldots ,x_{n}\right] $ be an eigenvector of $A$
having all its entries nonzero. Then there exists a matrix $B$ similar to $A$
with diagonal entries $\gamma _{1},\ldots ,\gamma _{n}.$
\end{theorem}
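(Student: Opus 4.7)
The plan is to reduce the problem to the constant-row-sum setting by a diagonal conjugation, and then to use Lemma~\ref{SoCa} to add a rank-one perturbation that realigns the diagonal entries without altering the similarity class.

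Let $\lambda$ denote the eigenvalue of $A$ corresponding to $\mathbf{x}$. Since every entry of $\mathbf{x}$ is nonzero, $D:=\operatorname{diag}(x_1,\ldots,x_n)$ is invertible, and I would first pass to $\tilde A := D^{-1}AD$. Two elementary observations are then central. First, $\tilde A\mathbf{e} = D^{-1}A\mathbf{x} = \lambda D^{-1}\mathbf{x} = \lambda\mathbf{e}$, so $\tilde A\in\mathcal{CS}_{\lambda}$. Second, conjugation by a diagonal matrix preserves diagonal entries, so $\tilde A$ and $A$ share the diagonal $(a_{11},\ldots,a_{nn})$; in particular $\operatorname{tr}\tilde A = \operatorname{tr} A = \sum_i\gamma_i$.

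Next, I would choose the perturbation vector $\mathbf{q} := [\gamma_1-a_{11},\ldots,\gamma_n-a_{nn}]^{T}$, so that the trace hypothesis translates to $\sum_{i=1}^{n} q_i = 0$. Invoking the ``in particular'' clause of Lemma~\ref{SoCa}, the matrix $\tilde B := \tilde A + \mathbf{e}\mathbf{q}^{T}$ is similar to $\tilde A$. Its $(i,i)$-entry is $a_{ii}+q_i=\gamma_i$, so $\tilde B$ has the desired diagonal, and setting $B:=\tilde B$ one obtains the chain of similarities $B \sim \tilde A \sim A$, completing the construction.

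The step I expect to be most delicate is the rigorous use of Lemma~\ref{SoCa}: the Jordan-form statement there is phrased under the non-degeneracy condition $\lambda+\sum q_i\neq\lambda_j$ for $j\geq 2$, which in our regime (where $\sum q_i=0$) forces $\lambda$ to be a simple eigenvalue of $A$. The non-scalar hypothesis on $A$ rules out the clearly bad case $A=\lambda I$, in which the rank-one correction would create a new Jordan block and destroy similarity; if $\lambda$ happens to be a non-simple eigenvalue with a genuine Jordan chain, a supplementary argument may be needed, for instance replacing $\mathbf{x}$ by an eigenvector for a simple eigenvalue whenever one is available, or preceding the construction by a deformation that separates $\lambda$ from the remaining spectrum and then specializing. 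The remainder of the argument is a routine assembly of the conjugations $A\sim\tilde A$ and $\tilde A\sim\tilde B$.
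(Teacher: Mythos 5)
Your construction is precisely the one in the paper: conjugate by $D=\operatorname{diag}(x_1,\ldots,x_n)$ to land in $\mathcal{CS}_{\lambda}$, add $\mathbf{e}\mathbf{q}^{T}$ with $q_i=\gamma_i-a_{ii}$, and invoke Lemma~\ref{SoCa}. The only structural difference is that the paper also treats a second case where $A$ is diagonal (conjugating by a unipotent $S$ instead of $D$); under the stated hypothesis that $A$ admits an eigenvector with all entries nonzero, a diagonal $A$ would be forced to be scalar, so that case is vacuous for the theorem as literally stated and your omission of it is harmless.

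The delicate step you flagged is, however, a genuine gap, and it is present in the paper's proof as well. As stated, Lemma~\ref{SoCa} guarantees that $\tilde A$ and $\tilde A+\mathbf{e}\mathbf{q}^{T}$ are similar only under the non-degeneracy condition $\lambda+\sum_i q_i\neq\lambda_j$ for $j\geq 2$, which for $\sum_i q_i=0$ amounts to $\lambda$ being a simple eigenvalue. This is not a removable technicality. Take
\[
A=\begin{bmatrix}1&-1\\[2pt]1&-1\end{bmatrix},\qquad \Gamma=\{0,0\}.
\]
Then $A$ is non-scalar, $\mathbf{e}$ is an eigenvector with all entries nonzero (eigenvalue $\lambda=0$, of algebraic multiplicity two), and $\operatorname{tr}A=0=\gamma_1+\gamma_2$; the recipe gives $\mathbf{q}=(-1,1)^{T}$ and $B=A+\mathbf{e}\mathbf{q}^{T}=0$, which is not similar to the nonzero nilpotent $A$ --- even though a correct $B$ exists, namely the Jordan block $E_{1,2}$. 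So the argument as written establishes the theorem only when the eigenvalue attached to $\mathbf{x}$ is simple (or, more generally, when the rank-one perturbation happens not to alter the Jordan structure at $\lambda$). Your suggested repairs --- choosing $\mathbf{x}$ for a simple eigenvalue when one is available, or supplying an auxiliary argument in the degenerate case --- are indeed what is needed to close the proof; neither your write-up nor the paper actually carries this out.
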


\begin{proof}
First, let $A$ be non-diagonal and let $D=diag\{x_{1},\ldots ,x_{n}\}.$ Then 
$D$ is a nonsingular matrix and $D^{-1}AD\in \mathcal{CS}_{\lambda },$ where 
$A\mathbf{x}=\lambda \mathbf{x.}$ Let 
\begin{equation*}
\mathbf{q}^{T}=\left[ q_{1},\ldots ,q_{n}\right] \text{ with }q_{i}=\gamma
_{i}-a_{ii},\text{ }i=1,\ldots ,n.
\end{equation*}%
Then $B=D^{-1}AD+\mathbf{eq}^{T}$ has diagonal entries $\gamma _{1},\ldots
,\gamma _{n}$ and from Lemma \ref{SoCa}, $B$ is similar to $A.$\newline
Second, let $A$ be diagonal. Consider 
\begin{equation*}
S=\left[ 
\begin{array}{cc}
1 & \mathbf{0}^{T} \\ 
\mathbf{e} & I_{n-1}%
\end{array}%
\right] \text{ \ and \ }S^{-1}=\left[ 
\begin{array}{cc}
1 & \mathbf{0}^{T} \\ 
-\mathbf{e} & I_{n-1}%
\end{array}%
\right] .
\end{equation*}%
Then $S^{-1}AS\in \mathcal{CS}_{\lambda },$ where $\lambda $ is an
eigenvalue of $A$ (a diagonal entry of $A$), and 
\begin{equation*}
B=S^{-1}AS+\mathbf{eq}^{T},\text{ \ with \ }q_{i}=\gamma _{i}-a_{ii},
\end{equation*}%
is similar to $A$ and has diagonal entries $\gamma _{1},\ldots ,\gamma _{n}.$
\end{proof}

\noindent Let us consider the Example $6$ in \cite{Borobia}:

\begin{example}
We want to construct a matrix with diagonal entries $3,5,-2,6,-1$ and
similar to%
\begin{equation*}
A=\left[ 
\begin{array}{ccccc}
4 & 0 & 4 & -3 & 5 \\ 
2 & 3 & 0 & 2 & 3 \\ 
0 & -2 & 2 & 5 & 4 \\ 
7 & 1 & 3 & 4 & 0 \\ 
2 & 5 & 3 & 0 & -2%
\end{array}%
\right] .
\end{equation*}%
$A$ has not constant row sums, but it has an eigenvector with all nonzero
entries $\mathbf{x}^{T}\mathbf{=}\left[ x_{i}\right] _{i=1}^{5}=\left[ 
\begin{array}{ccccc}
\frac{497}{601} & \frac{1259}{1033} & \frac{335}{241} & \frac{1698}{899} & 1%
\end{array}%
\right] .$ Let $D=diag\{x_{1},\ldots ,x_{5}\}.$ Then%
\begin{equation*}
D^{-1}AD=\left[ 
\begin{array}{ccccc}
4 & 0 & \frac{3893}{579} & -\frac{1713}{250} & \frac{2352}{389} \\ 
\frac{802}{591} & 3 & 0 & \frac{1652}{533} & \frac{3131}{1272} \\ 
0 & -\frac{2078}{1185} & 2 & \frac{2901}{427} & \frac{964}{335} \\ 
\frac{2081}{679} & \frac{533}{826} & \frac{1742}{789} & 4 & 0 \\ 
\frac{994}{601} & \frac{1298}{213} & \frac{1005}{241} & 0 & -2%
\end{array}%
\right] \in \mathcal{CS}_{\frac{5197}{524}}.
\end{equation*}%
Now we apply Theorem \ref{Bra} to obtain%
\begin{equation*}
B=D^{-1}AD+\mathbf{e}\left[ -1,2,-4,2,1\right]
\end{equation*}%
with the required diagonal entries $3,5,-2,6,-1.$ Moreover, from Lemma \ref%
{SoCa} $B$ is similar to $A.$
\end{example}

\subsection{The nonnegative case}

\noindent In this section we show that if (\ref{Co3}) is satisfied, then
lists $\Lambda =\{\lambda _{1},\lambda _{2},\ldots ,\lambda _{n}\}$ with
only one positive eigenvalue, of type $i)$ in (\ref{Co1}) and of type $ii)$
in (\ref{Co2}), are always realizable by a nonnegative matrix $B$ with
arbitrarily prescribed diagonal entries $\gamma _{1},\gamma _{2},\ldots
,\gamma _{n}$ (except for the condition (\ref{Co3})). Although lists in (\ref%
{Co2}) contain lists in (\ref{Co1}), we shall prove both, since the proofs
are different.

\begin{theorem}
\label{Tnon1}Let $\Lambda =\{\lambda _{1},\lambda _{2},\ldots ,\lambda _{n}\}
$ be a realizable list of complex numbers with $\lambda _{i}\in \mathcal{F},$
$i=2,\ldots ,n.$ Let $\Gamma =\{\gamma _{1},\ldots ,\gamma _{n}\}$ be a list
of nonnegative real numbers such that $\sum\limits_{i=1}^{n}\gamma
_{i}=\sum\limits_{i=1}^{n}\lambda _{i}.$ Then there exists a nonnegative
matrix $B\in \mathcal{CS}_{\lambda _{1}}$ with spectrum $\Lambda $ and
diagonal entries $\Gamma .$ If $A\in \mathcal{CS}_{\lambda _{1}}$ has the
spectrum $\Lambda ,$ then $B\in \mathcal{CS}_{\lambda _{1}}$ is similar to $%
A.$
\end{theorem}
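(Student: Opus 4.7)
Plan. My strategy is to adapt the construction of Theorem 2.1 to the nonnegative setting. Since $\Lambda$ is realizable with $\lambda_i\in\mathcal{F}$ for $i\geq 2$, there exists a nonnegative matrix $A\in\mathcal{CS}_{\lambda_{1}}$ with spectrum $\Lambda$. With such an $A$ in hand, I would set $\mathbf{q}^T=[\gamma_1-a_{11},\ldots,\gamma_n-a_{nn}]$; the hypothesis (\ref{Co3}) together with $\mathrm{tr}(A)=\sum\lambda_i$ forces $\sum_i q_i=0$. Lemma \ref{SoCa} then guarantees that $B:=A+\mathbf{e}\mathbf{q}^T$ lies in $\mathcal{CS}_{\lambda_{1}}$, is similar to $A$, has spectrum $\Lambda$, and has diagonal $\gamma_1,\ldots,\gamma_n$. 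Crucially, this also establishes the similarity assertion at the end of the theorem: any realizer $A$ one picks yields a $B$ similar to it through the same Brauer shift.

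What remains is the nonnegativity of $B$. Since $B_{ii}=\gamma_i\geq 0$ is immediate, the task reduces to the column-wise inequalities $B_{ij}=a_{ij}+\gamma_j-a_{jj}\geq 0$, i.e., $a_{ij}\geq a_{jj}-\gamma_j$ for all $i\neq j$. These are not satisfied by an arbitrary realization, so $A$ must be chosen as a function of $\Gamma$, with enough off-diagonal mass in each column $j$ to absorb the downward shift $a_{jj}-\gamma_j$. The hypothesis $\lambda_i\in\mathcal{F}$ is precisely what furnishes the necessary flexibility: for a conjugate pair $\lambda,\bar\lambda\in\mathcal{F}$, the inequality $|\mathrm{Re}\,\lambda|\geq|\mathrm{Im}\,\lambda|$ implies $2\lambda_1\mathrm{Re}\,\lambda+|\lambda|^{2}\leq 0$, so conditions (iii)--(iv) of Lemma \ref{perfect} hold automatically, and every such pair can be realized by a $3\times 3$ nonnegative block in $\mathcal{CS}_{\lambda_{1}}$ with essentially any nonnegative diagonal triple of the correct trace. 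An analogous $2\times 2$ construction handles each real $\lambda_j\leq 0$.

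The main obstacle will be the global assembly of these building blocks into a single realizer $A$ whose columns obey $a_{ij}\geq a_{jj}-\gamma_j$. I would do this by an inductive \v{S}migoc-style fusion (Lemma \ref{Smigoc}): peel off one real eigenvalue (or one conjugate pair) at a time, allocate to it two or three coordinates of $\Gamma$, and glue the resulting $2\times 2$ or $3\times 3$ block to the realization of the remaining sublist. Hypothesis (\ref{Co3}) keeps the trace bookkeeping balanced globally, and the bound $\gamma_k\leq\sum\gamma_i=\sum\lambda_i\leq\lambda_1$ (using $\mathrm{Re}\,\lambda_i\leq 0$) ensures condition (i) of Lemma \ref{perfect} at every stage. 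The delicate step is the local matching of a sub-sum of $\Gamma$ to each block's prescribed trace ($\lambda_{1}+\lambda_{j}$ or $\lambda_{1}+2\mathrm{Re}\,\lambda$); I expect this to be handled by sorting $\Gamma$ in decreasing order and pairing the largest $\gamma$'s with the eigenvalues of largest modulus, then carrying the residual sub-problem forward by induction.
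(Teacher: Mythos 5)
Your opening move is exactly the paper's: apply the Brauer/\v{S}migoc--Soto shift $B=A+\mathbf{e}\mathbf{q}^{T}$ with $q_{i}=\gamma_{i}-a_{ii}$, noting that the trace hypothesis forces $\sum_{i}q_{i}=0$ so Lemma \ref{SoCa} preserves the spectrum and similarity class. You then correctly isolate the obstacle, namely that $B_{ij}=a_{ij}+\gamma_{j}-a_{jj}\geq 0$ must hold and that an arbitrary realizer $A$ need not satisfy it. From here, however, you diverge into a substantially heavier (and only sketched) programme --- inductive \v{S}migoc gluing of $2\times 2$ and $3\times 3$ blocks built from Lemma \ref{perfect}, with a ``delicate'' $\Gamma$-matching step that you explicitly leave open (``I expect this to be handled by\dots''). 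That is not a proof, and in fact it is the route the paper reserves for the $\mathcal{G}$ (\v{S}migoc-type) case in the subsequent theorem, where it is genuinely needed.

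The insight you miss is that for $\mathcal{F}$-lists no gluing or $\Gamma$-sorting is required, because one can write down $A$ in closed form with a sign structure that makes nonnegativity of $B$ trivial. The paper takes $A\in\mathcal{CS}_{\lambda_{1}}$ to be the sparse block lower matrix with $\lambda_{1}$ in position $(1,1)$, each real $\lambda_{j}$ on the diagonal with $\lambda_{1}-\lambda_{j}$ in column $1$, and each conjugate pair $x\pm iy$ realized by a $2\times 2$ block $\begin{bmatrix}x&-y\\ y&x\end{bmatrix}$ with first-column entries $\lambda_{1}-x\mp y$; all other off-diagonal entries are zero. The Suleimanova condition ($x\leq 0$, $|x|\geq|y|$) makes every first-column entry $\geq\lambda_{1}$, while the remaining off-diagonal entries are $\geq 0$. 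Consequently, for $j\geq 2$ we have $a_{jj}=\operatorname{Re}\lambda_{j}\leq 0$, hence $a_{jj}-\gamma_{j}\leq 0\leq a_{ij}$, and for $j=1$ we have $a_{i1}\geq\lambda_{1}\geq\lambda_{1}-\gamma_{1}$; so $B\geq 0$ with no case analysis. Your remark that ``off-diagonal mass'' is needed in each column is actually pointing the wrong way --- the trick is to make $a_{jj}$ nonpositive so the shift $a_{jj}-\gamma_{j}$ is itself nonpositive, not to build up off-diagonal entries to absorb a positive shift. As written, your argument has a real gap (the inductive gluing is not carried out, and the local trace-matching is acknowledged but unresolved), and even if completed it would be considerably less elementary than the paper's one-line verification.
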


\begin{proof}
Let $\lambda _{1},\lambda _{2},\ldots ,\lambda _{p}$ be real numbers
(nonpositive numbers, of course) and let $\lambda
_{p+1}=x_{p+1}+iy_{p+1},\ldots ,\lambda _{n}=x_{n-1}-iy_{n-1}$ be complex
nonreal numbers. Then the matrix%
\begin{equation*}
A=\left[ 
\begin{array}{ccccccccc}
\lambda _{1} &  &  &  &  &  &  &  &  \\ 
\lambda _{1}-\lambda _{2} & \lambda _{2} &  &  &  &  &  &  &  \\ 
\vdots &  & \ddots &  &  &  &  &  &  \\ 
\lambda _{1}-\lambda _{p} &  &  & \lambda _{p} &  &  &  &  &  \\ 
\lambda _{1}-x_{p+1}+y_{p+1} &  &  &  & x_{p+1} & -y_{p+1} &  &  &  \\ 
\lambda _{1}-x_{p+1}-y_{p+1} &  &  &  & y_{p+1} & x_{p+1} &  &  &  \\ 
\vdots &  &  &  &  &  & \ddots &  &  \\ 
\lambda _{1}-x_{n-1}+y_{n-1} &  &  &  &  &  &  & x_{n-1} & -y_{n-1} \\ 
\lambda _{1}-x_{n-1}-y_{n-1} &  &  &  &  &  &  & y_{n-1} & x_{n-1}%
\end{array}%
\right]
\end{equation*}%
has spectrum $\Lambda $ and constant row sums equal to $\lambda _{1}.$ Let $%
\mathbf{q}^{T}=\left[ q_{1},q_{2},\ldots ,q_{n}\right] $ with $q_{i}=\gamma
_{i}-Re\lambda _{i},$ $i=1,\ldots ,n.$ then $B=A+\mathbf{eq}^{T}$ is
nonnegative with diagonal entries $\gamma _{1},\ldots ,\gamma _{n}$ and from
Lemma \ref{SoCa} it is also similar to $A.$
\end{proof}

\begin{corollary}
Let $\Lambda =\{\lambda _{1},\lambda _{2},\ldots ,\lambda _{n}\}$ be a
realizable list of real numbers with $\lambda _{i}\leq 0,$ $i=2,\ldots ,n.$
Let $\Gamma =\{\gamma _{1},\ldots ,\gamma _{n}\}$ be a list of nonnegative
real numbers such that $\sum\limits_{i=1}^{n}\gamma
_{i}=\sum\limits_{i=1}^{n}\lambda _{i}.$ Then there exists a nonnegative
matrix $B\in \mathcal{CS}_{\lambda _{1}}$ with spectrum $\Lambda $ and
diagonal entries $\Gamma .$
\end{corollary}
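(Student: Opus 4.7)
The plan is to deduce this corollary as an immediate specialization of Theorem \ref{Tnon1} to the all-real setting. First I would verify the containment $\lambda_i\in\mathcal{F}$ for each $i\geq 2$: since $\lambda_i$ is real and nonpositive by hypothesis, one has $\mathrm{Re}\,\lambda_i=\lambda_i\leq 0$ and $|\mathrm{Re}\,\lambda_i|=|\lambda_i|\geq 0=|\mathrm{Im}\,\lambda_i|$, placing $\lambda_i$ in the set defined by (\ref{Co1}). The trace condition $\sum\gamma_i=\sum\lambda_i$ and nonnegativity of the $\gamma_i$ are assumed outright. Thus every hypothesis of Theorem \ref{Tnon1} holds, and invoking it produces the desired nonnegative matrix $B\in\mathcal{CS}_{\lambda_1}$ with spectrum $\Lambda$ and diagonal $\Gamma$.

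As a sanity check, I would trace the construction of Theorem \ref{Tnon1} through this simplified case. With no complex conjugate pairs present, the matrix $A$ collapses to a lower triangular form carrying $\lambda_1$ in position $(1,1)$, $\lambda_i$ in position $(i,i)$ for $i\geq 2$, and entries $\lambda_1-\lambda_i\geq 0$ in the first column below the diagonal (nonnegativity here uses that $\lambda_1\geq 0$ as the Perron root while $\lambda_i\leq 0$). Setting $q_i=\gamma_i-\lambda_i$ and forming $B=A+\mathbf{e}\mathbf{q}^T$, the $(i,j)$ entry with $j\geq 2$, $i\neq j$ becomes $\gamma_j-\lambda_j\geq -\lambda_j\geq 0$; the $(1,1)$ entry becomes $\gamma_1\geq 0$; and the $(i,1)$ entry for $i\geq 2$ becomes $\gamma_1-\lambda_i\geq 0$. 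Hence $B$ is nonnegative, lies in $\mathcal{CS}_{\lambda_1}$, and by Lemma \ref{SoCa} is similar to $A$, carrying the prescribed spectrum and diagonal.

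There is no genuine obstacle here: the entire content of the corollary lies in the elementary observation that a nonpositive real number satisfies the defining inequalities of $\mathcal{F}$, after which Theorem \ref{Tnon1} does all the work. The only point to be careful about is confirming that the realizability assumption on $\Lambda$ together with the sign hypotheses forces $\lambda_1\geq 0$, which is needed to ensure the first-column entries $\lambda_1-\lambda_i$ of $A$ are nonnegative; but this is standard since $\lambda_1$ is the Perron eigenvalue of a nonnegative realizing matrix.
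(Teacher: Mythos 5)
Your proof is correct and is essentially the paper's intended argument: the corollary is stated as an immediate consequence of Theorem \ref{Tnon1}, since a real $\lambda_i\leq 0$ trivially satisfies $\mathrm{Re}\,\lambda_i\leq 0$ and $|\mathrm{Re}\,\lambda_i|\geq 0=|\mathrm{Im}\,\lambda_i|$, so $\lambda_i\in\mathcal{F}$. Your explicit trace through the lower-triangular construction and the Brauer perturbation is a correct (if optional) verification of the same mechanism.
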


\noindent Observe that if $\sum\limits_{i=1}^{n}\lambda _{i}>0,$ then in
Theorem \ref{Tnon1} we may compute a positive matrix $B$ with diagonal
entries $\gamma _{1},\ldots ,\gamma _{n}.$

\bigskip

\noindent Next, we show that given a list of complex numbers of \v{S}migoc
type $\Lambda =\{\lambda _{1},\ldots ,\lambda _{n}\}$ and a list of
nonnegative real numbers $\Gamma =\{\gamma _{1},\ldots ,\gamma _{n}\}$ such
that $\sum\limits_{i=1}^{n}\gamma _{i}=\sum\limits_{i=1}^{n}\lambda _{i},$
then there exists a nonnegative matrix with spectrum $\Lambda $ and diagonal
entries $\gamma _{1},\ldots ,\gamma _{n}.$ We shall need the following
lemmas:

\begin{lemma}
\label{caso 3} Let $\Lambda =\{\lambda _{1},x+iy,x-iy\}$ be a realizable
list of \v{S}migoc type, and let $\Gamma =\{\gamma _{1},\gamma _{2},\gamma
_{3}\}$ be a list of nonnegative real numbers with $\gamma _{1}+\gamma
_{2}+\gamma _{3}=\lambda _{1}+2x$. Then there exists a nonnegative matrix $%
B\in \mathcal{CS}_{\lambda _{1}}$ with spectrum $\Lambda $ and diagonal
entries $\gamma _{1},\gamma _{2},\gamma _{3}.$
\end{lemma}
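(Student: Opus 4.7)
The plan is to appeal directly to Lemma \ref{perfect}: since $\Lambda=\{\lambda_{1},x+iy,x-iy\}$ is realizable, $\lambda_{1}$ is the Perron root and the trace condition forces $\lambda_{1}+2x\geq 0$. I would verify the four conditions (i)--(iv) of Lemma \ref{perfect} for the prescribed $\gamma_{i}$, which immediately produces a nonnegative $B\in\mathcal{CS}_{\lambda_{1}}$ with spectrum $\Lambda$ and diagonal $\Gamma$.

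Conditions (ii) and (iv) are essentially free: (ii) is the hypothesis, and since $\gamma_{k}\geq 0$ and $x\leq 0$, we have $\max_{k}\gamma_{k}\geq 0\geq x=\mathrm{Re}\,\lambda_{2}$. Condition (i) is almost as easy: because $x\leq 0$, the hypothesis gives $\sum_{k}\gamma_{k}=\lambda_{1}+2x\leq\lambda_{1}$, and together with $\gamma_{k}\geq 0$ this yields $0\leq\gamma_{k}\leq\lambda_{1}$.

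The only substantive step is (iii). I would first compute
\[
\lambda_{1}\lambda_{2}+\lambda_{1}\lambda_{3}+\lambda_{2}\lambda_{3}=2x\lambda_{1}+x^{2}+y^{2},
\]
and then use the \v{S}migoc bound $y^{2}\leq 3x^{2}$ to estimate
\[
2x\lambda_{1}+x^{2}+y^{2}\leq 2x\lambda_{1}+4x^{2}=2x(\lambda_{1}+2x)\leq 0,
\]
where the last inequality uses $x\leq 0$ together with the trace nonnegativity $\lambda_{1}+2x\geq 0$. Since $\gamma_{1}\gamma_{2}+\gamma_{1}\gamma_{3}+\gamma_{2}\gamma_{3}\geq 0$ by nonnegativity of the $\gamma_{i}$, condition (iii) holds. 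This is the only place the \v{S}migoc bound is used, and it is the only real obstacle in the argument; everything else flows from the sign constraints $x\leq 0$ and $\gamma_{k}\geq 0$. Once all four conditions of Lemma \ref{perfect} are verified, the existence of the desired $B\in\mathcal{CS}_{\lambda_{1}}$ is immediate.
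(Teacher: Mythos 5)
Your proposal is correct and takes essentially the same route as the paper: the paper also reduces the lemma to verifying conditions (i)--(iv) of Lemma \ref{perfect} (declaring them ``easy to check'') and then writes down the resulting matrix explicitly. Your verification of (iii) via $y^{2}\leq 3x^{2}$ together with $x\leq 0$ and $\lambda_{1}+2x=\sum_{k}\gamma_{k}\geq 0$ is precisely the computation the paper omits.
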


\begin{proof}
Since $\Lambda $ is of \v{S}migoc type, $x\leq 0,-\sqrt{3}x\geq y.$ Then it
is easy to check that $\Lambda $ satisfies conditions $i)$ $ii)$ $iii)$ $iv)$
from Lemma \ref{perfect} and a matrix 
\begin{equation*}
B=%
\begin{bmatrix}
\gamma _{1} & 0 & \lambda _{1}-\gamma _{1} \\ 
\lambda _{1}-\gamma _{2}-p & \gamma _{2} & p \\ 
0 & \lambda _{1}-\gamma _{3} & \gamma _{3}%
\end{bmatrix}%
,
\end{equation*}%
where $p=\frac{1}{\lambda _{1}-\gamma _{3}}[\gamma _{1}\gamma _{2}+\gamma
_{1}\gamma _{3}+\gamma _{2}\gamma _{3}-(2\lambda _{1}x+x^{2}+y^{2})]$, is
nonnegative with spectrum $\Lambda $ and with diagonal entries $\gamma
_{1},\gamma _{2},\gamma _{3}$.
\end{proof}

\begin{lemma}
\label{L smigoc} Let $\Lambda =\{\lambda _{1},\lambda _{2},\ldots ,\lambda
_{n}\}$ be a realizable list of complex numbers, $n\geq 3$, with $\lambda
_{i}\in \left( \mathcal{G}-\mathcal{F}\right) $, $i=2,\ldots ,n.$ Let $%
\Gamma =\{\gamma _{1},\cdots ,\gamma _{n}\}$ be a list of nonnegative real
numbers such that $\sum_{i=1}^{n}\gamma _{i}=\sum_{i=1}^{n}\lambda _{i}.$
Then there exists a nonnegative matrix $B$ with spectrum $\Lambda $ and
diagonal entries $\gamma _{1},\ldots ,\gamma _{n}$.
\end{lemma}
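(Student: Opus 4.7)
I would proceed by induction on the number $k$ of complex conjugate pairs in $\Lambda$. Every $\lambda_{i}\in\mathcal{G}-\mathcal{F}$ satisfies $|Im\,\lambda_{i}|>|Re\,\lambda_{i}|$ and is therefore nonreal, so the non-Perron eigenvalues occur in conjugate pairs and $n=2k+1$ is odd. The base case $k=1$, namely $n=3$, is exactly Lemma \ref{caso 3}. For the inductive step the idea is to peel off one complex pair, realize it together with a suitably chosen real ``link'' eigenvalue as a $3\times 3$ nonnegative block via Lemma \ref{caso 3}, and then attach that block to a smaller realizing matrix, produced by the inductive hypothesis, using \v{S}migoc's gluing lemma, Lemma \ref{Smigoc}.

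\textbf{Inductive step.} Suppose $n\geq 5$ and write the last conjugate pair as $\lambda_{n-1},\lambda_{n}=x\pm iy$. I would assign the last three prescribed diagonal entries $\gamma_{n-2},\gamma_{n-1},\gamma_{n}$ to the $3\times 3$ block and set $c:=\gamma_{n-2}+\gamma_{n-1}+\gamma_{n}-2x$. Since $x\leq 0$ one has $c\geq 2|x|$, and since the pair lies in $\mathcal{G}$ one has $|y|\leq\sqrt{3}|x|$, so $\sqrt{x^{2}+y^{2}}\leq 2|x|\leq c$. Consequently $\{c,x+iy,x-iy\}$ is a realizable \v{S}migoc-type triple whose trace is $c+2x=\gamma_{n-2}+\gamma_{n-1}+\gamma_{n}$ by construction, and Lemma \ref{caso 3} yields a $3\times 3$ nonnegative $B\in\mathcal{CS}_{c}$ with spectrum $\{c,x\pm iy\}$ and the prescribed diagonal. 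I then apply the inductive hypothesis to $\Lambda'=\{\lambda_{1},\ldots,\lambda_{n-2}\}$ and $\Gamma'=\{\gamma_{1},\ldots,\gamma_{n-3},c\}$: $\Lambda'$ is still of \v{S}migoc type with Perron $\lambda_{1}$ and nonnegative trace $\sum\Lambda-2x$, hence remains realizable; a direct computation gives $\sum\Gamma'=\sum\Lambda'$; and $c\geq 0$ together with the nonnegativity of the original $\gamma_{i}$'s makes $\Gamma'$ a list of nonnegative reals. The hypothesis produces $A\in\mathcal{CS}_{\lambda_{1}}$ of size $n-2$ with spectrum $\Lambda'$ and diagonal $\Gamma'$; after a permutation similarity placing $c$ into the $(n-2,n-2)$ position, Lemma \ref{Smigoc} glues $A$ and $B$ along the shared eigenvalue $c$ into an $n\times n$ matrix $C$ with spectrum $\Lambda$ and diagonal $\Gamma$.

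\textbf{Main obstacle.} The delicate point I expect is certifying that $C$ is nonnegative and lies in $\mathcal{CS}_{\lambda_{1}}$. Because $B\in\mathcal{CS}_{c}$ its Perron right eigenvector is $\mathbf{s}=\mathbf{e}$, and the corresponding left Perron eigenvector $\mathbf{t}$ is nonnegative and can be scaled so that $\mathbf{t}^{T}\mathbf{e}=1$; this choice keeps both off-diagonal blocks $\mathbf{a}\mathbf{t}^{T}$ and $\mathbf{s}\mathbf{b}^{T}$ of $C$ nonnegative and forces each row of $C$ to sum to $\lambda_{1}$. The critical arithmetic input is the inequality $c\geq\sqrt{x^{2}+y^{2}}$, which is precisely the \v{S}migoc constraint $|y|\leq\sqrt{3}|x|$ recast as the statement that $c$ is an admissible Perron for the $3\times 3$ block; the complementary bound $c\leq\sum\Lambda'\leq\lambda_{1}$ is what guarantees that $c$ is in turn an admissible diagonal entry for the matrix provided by the inductive hypothesis.
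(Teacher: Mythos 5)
Your proposal is correct and follows essentially the same route as the paper: induction in steps of two, splitting off the last conjugate pair with the linking eigenvalue $c=\gamma_{n-2}+\gamma_{n-1}+\gamma_{n}-2x$ (which equals the paper's $c=\sum_{i=1}^{n-2}\lambda_{i}-\sum_{i=1}^{n-3}\gamma_{i}$ by the trace identity), realizing the triple $\{c,x\pm iy\}$ via Lemma \ref{caso 3}, and gluing with Lemma \ref{Smigoc}. You in fact supply more of the verification (nonnegativity and admissibility of $c$, realizability of the truncated list, the choice of $\mathbf{t}$) than the paper's own proof does.
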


\begin{proof}
If $n=3$, the result follows from Lemma \ref{caso 3}. Suppose the result is
true for lists with $n-2$ numbers. We take the partition $\Lambda =\Lambda
_{1}^{\prime }\cup \Lambda _{2}^{\prime },$ where%
\begin{equation*}
\Lambda _{1}^{\prime }=\{\lambda _{1},\lambda _{2},\ldots ,\lambda _{n-2}\},%
\text{ \ }\Lambda _{2}^{\prime }=\{\lambda _{n-1},\lambda _{n}\}
\end{equation*}%
with associated lists%
\begin{equation*}
\Lambda _{1}=\Lambda _{1}^{\prime },\text{ \ }\Lambda _{2}=\{c,\lambda
_{n-1},\lambda _{n}\},\text{ \ }c=\sum_{i=1}^{n-2}\lambda
_{i}-\sum_{i=1}^{n-3}\gamma _{i}.
\end{equation*}%
From the induction hypothesis, $\Lambda _{1}$ is realizable by a nonnegative
matrix 
\begin{equation*}
A_{1}=%
\begin{bmatrix}
A_{11} & \mathbf{a} \\ 
\mathbf{b}^{T} & c%
\end{bmatrix}%
\end{equation*}

\noindent with diagonal entries $\{\gamma _{1},\gamma _{2},\cdots ,\gamma
_{n-3},c\}.$ From Lemma \ref{caso 3}, $\Lambda _{2}$ is realizable by a
nonnegative matrix 
\begin{equation*}
A_{2}=%
\begin{bmatrix}
\gamma _{n-2} & 0 & c-\gamma _{n-2} \\ 
c-\gamma _{n-1}-p & \gamma _{n-1} & p \\ 
0 & c-\gamma _{n} & \gamma _{n}%
\end{bmatrix}%
\in \mathcal{CS}_{c},
\end{equation*}%
with diagonal entries $\gamma _{n-2},\gamma _{n-1},\gamma _{n}.$ Finally,
from Lemma \ref{Smigoc} with $A_{2}\mathbf{e}=c\mathbf{e}$, $\mathbf{t}%
^{T}A_{2}=c\mathbf{t}^{T}$, and $\mathbf{t}^{T}\mathbf{e}=1,$ a matrix 
\begin{equation*}
B=%
\begin{bmatrix}
A_{11} & \mathbf{at}^{T} \\ 
\mathbf{eb}^{T} & A_{2}%
\end{bmatrix}%
\end{equation*}%
is nonnegative with spectrum $\Lambda $ and diagonal entries $\gamma
_{1},\ldots ,\gamma _{n}$.
\end{proof}

\begin{theorem}
Let $\Lambda =\{\lambda _{1},\lambda _{2},\ldots ,\lambda _{n}\},$ $n\geq 3$%
, be a realizable list of complex numbers, $\lambda _{i}\in \mathcal{G}$, $%
i=2,\ldots ,n$. Let $\Gamma =\{\gamma _{1},\cdots ,\gamma _{n}\}$ be a list
of nonnegative real numbers such that $\sum_{i=1}^{n}\gamma
_{i}=\sum_{i=1}^{n}\lambda _{i}$. Then there exists a nonnegative matrix $B$
with spectrum $\Lambda $ and diagonal entries $\gamma _{1},\ldots ,\gamma
_{n}.$
\end{theorem}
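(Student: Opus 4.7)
The strategy is to split $\Lambda$ according to which eigenvalues lie in the Suleimanova cone $\mathcal{F}$ and which lie strictly inside the \v{S}migoc cone $\mathcal{G}-\mathcal{F}$, realize each part with the appropriate previous result, and glue them together using Lemma \ref{Smigoc}.

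After relabeling, I would write $\lambda_2,\ldots,\lambda_p\in\mathcal{F}$ and $\lambda_{p+1},\ldots,\lambda_n\in\mathcal{G}-\mathcal{F}$; note that the latter eigenvalues are non-real and therefore occur in conjugate pairs. If $p=n$ the statement is Theorem \ref{Tnon1}, and if $p=1$ it is Lemma \ref{L smigoc}. In the mixed case $1<p<n$ I introduce the bridging scalar
\[
c=\lambda_1+\sum_{i=2}^{p}\lambda_i-\sum_{i=1}^{p-1}\gamma_i=\sum_{i=p}^{n}\gamma_i-\sum_{j=p+1}^{n}\lambda_j,
\]
the two expressions being equal thanks to the hypothesis $\sum\gamma_i=\sum\lambda_i$. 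This choice makes the trace of the Suleimanova-type sublist $\Lambda_1'=\{\lambda_1,\ldots,\lambda_p\}$ equal to $\gamma_1+\cdots+\gamma_{p-1}+c$, and the trace of the \v{S}migoc-type sublist $\Lambda_2'=\{c,\lambda_{p+1},\ldots,\lambda_n\}$ equal to $\gamma_p+\cdots+\gamma_n$, so that each block will receive a nonnegative diagonal summing to its own trace.

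Next I would verify three points: (a) $c\ge 0$, which is immediate from the second expression since $\operatorname{Re}\lambda_j\le 0$ and $\gamma_i\ge 0$; (b) $\Lambda_1'$ is realizable of Suleimanova type, which follows from the nonnegativity of its trace; and (c) $c\ge|\lambda_j|$ for every $j\in\{p+1,\ldots,n\}$, so that $c$ is the Perron eigenvalue of $\Lambda_2'$ and the list is realizable of \v{S}migoc type. I expect (c) to be the main obstacle. For this, the defining inequality $|\operatorname{Im}\lambda_j|\le\sqrt{3}\,|\operatorname{Re}\lambda_j|$ of $\mathcal{G}$ yields $|\lambda_j|\le 2|\operatorname{Re}\lambda_j|$, and since both $\lambda_j$ and $\overline{\lambda_j}$ appear in the sum $-\sum_{j=p+1}^{n}\lambda_j=\sum_{j=p+1}^{n}|\operatorname{Re}\lambda_j|$, each conjugate pair contributes $2|\operatorname{Re}\lambda_j|$; adding the nonnegative $\gamma$-terms gives $c\ge 2|\operatorname{Re}\lambda_j|\ge|\lambda_j|$. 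It is exactly here that the full \v{S}migoc opening angle (rather than merely the Suleimanova one) is exploited.

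With (a)--(c) in hand I apply Theorem \ref{Tnon1} to $\Lambda_1'$ with prescribed diagonal $\{\gamma_1,\ldots,\gamma_{p-1},c\}$ to obtain a nonnegative $A\in\mathcal{CS}_{\lambda_1}$ whose $(p,p)$ entry equals $c$, and Lemma \ref{L smigoc} to $\Lambda_2'$ with prescribed diagonal $\{\gamma_p,\ldots,\gamma_n\}$ to obtain a nonnegative $B\in\mathcal{CS}_{c}$. Finally I invoke Lemma \ref{Smigoc} with $\mathbf{s}=\mathbf{e}$ (which is valid since $B\in\mathcal{CS}_c$) and $\mathbf{t}$ a left Perron eigenvector of $B$ normalized so that $\mathbf{t}^T\mathbf{e}=1$. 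The off-diagonal blocks $\mathbf{a}\mathbf{t}^T$ and $\mathbf{e}\mathbf{b}^T$ of the glued matrix are nonnegative because $\mathbf{a},\mathbf{b},\mathbf{e}$ and $\mathbf{t}$ are all nonnegative, so the resulting $n\times n$ matrix has spectrum $\Lambda$ and the prescribed diagonal $\gamma_1,\ldots,\gamma_n$.
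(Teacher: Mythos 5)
Your proposal is correct and follows essentially the same route as the paper: split $\Lambda$ into the $\mathcal{F}$-part and the $(\mathcal{G}-\mathcal{F})$-part, define the same bridging value $c$, realize the first sublist via Theorem \ref{Tnon1} with diagonal $\gamma_1,\ldots,\gamma_{p-1},c$ and the second via Lemma \ref{L smigoc}, then glue with Lemma \ref{Smigoc}. Your explicit verifications that $c\geq 0$ and $c\geq|\lambda_j|\,$ (via $|\lambda_j|\leq 2|\operatorname{Re}\lambda_j|$ on $\mathcal{G}$) are details the paper leaves implicit, and they are a welcome addition.
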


\ 

\begin{proof}
Let $\{\lambda _{2},\ldots ,\lambda _{p}\}\subset \mathcal{F}$ and let $%
\{\lambda _{p+1},\ldots ,\lambda _{n}\}\subset \left( \mathcal{G-F}\right) $%
. We take the partition $\Lambda =\Lambda _{1}^{\prime }\cup \Lambda
_{2}^{\prime },$ where 
\begin{equation*}
\Lambda _{1}^{\prime }=\{\lambda _{1},\lambda _{2},\ldots ,\lambda _{p}\},%
\text{ \ }\Lambda _{2}^{\prime }=\{\lambda _{p+1},\ldots ,\lambda _{n}\}
\end{equation*}%
with associated lists%
\begin{equation*}
\Lambda _{1}=\Lambda _{1}^{\prime },\text{ \ \ }\Lambda _{2}=\{c,\lambda
_{p+1},\ldots ,\lambda _{n}\},\ \ c=\sum_{i=1}^{p}\lambda
_{i}-\sum_{i=1}^{p-1}\gamma _{i}.
\end{equation*}%
From Theorem \ref{Tnon1}, $\Lambda _{1}$ is realizable by a nonnegative
matrix 
\begin{equation*}
A_{1}=%
\begin{bmatrix}
A_{11} & \mathbf{a} \\ 
\mathbf{b}^{T} & c%
\end{bmatrix}%
\in \mathcal{CS}_{\lambda _{1}}
\end{equation*}%
with diagonal entries $\gamma _{1},\ldots ,\gamma _{p-1},c.$ Moreover, from
Lemma \ref{L smigoc} $\Lambda _{2}$ is realizable by a nonnegative matrix $%
A_{2}$ with diagonal entries $\gamma _{p},\ldots ,\gamma _{n}$. Finally,
from Lemma \ref{Smigoc} a matrix 
\begin{equation*}
B=%
\begin{bmatrix}
A_{11} & \mathbf{at}^{T} \\ 
\mathbf{eb}^{T} & A_{2}%
\end{bmatrix}%
,
\end{equation*}%
with $A_{2}\mathbf{e}=c\mathbf{e}$, $\mathbf{t}^{T}A_{2}=c\mathbf{t}^{T}$,
and $\mathbf{t}^{T}\mathbf{e}=1$, is nonnegative with spectrum $\Lambda $
and diagonal entries $\gamma _{1},\ldots ,\gamma _{n}.$
\end{proof}

\begin{example}
Consider $\Lambda =\{16,-1,-2,-2+2i,-2-2i,-2+3i,-2-3i\}$. We want compute a
nonnegative matrix with spectrum $\Lambda $ and diagonal entries $%
\{0,1,2,0,2,0,0\}$. \newline
Then we consider the lists: 
\begin{equation*}
\Lambda _{1}=\{16,-1,-2,-2+2i,-2-2i\},\ \ \Lambda _{2}=\{c,-2+3i,-2-3i\}.
\end{equation*}%
From Theorem \ref{Tnon1} we compute a nonnegative matrix $A_{1}$ with
spectrum $\Lambda _{1}$ and diagonal entries $0,1,2,0,c,$ where $c$ must be
equal to $6$: \newline
\begin{equation*}
A_{1}=%
\begin{bmatrix}
16 &  &  &  &  \\ 
17 & -1 &  &  &  \\ 
18 & 0 & -2 &  &  \\ 
20 & 0 & 0 & -2 & -2 \\ 
16 & 0 & 0 & 2 & -2%
\end{bmatrix}%
+\mathbf{e}(-16,2,4,2,8)^{T}=%
\begin{bmatrix}
0 & 2 & 4 & 2 & 8 \\ 
1 & 1 & 4 & 2 & 8 \\ 
2 & 2 & 2 & 2 & 8 \\ 
4 & 2 & 4 & 0 & 6 \\ 
0 & 2 & 4 & 4 & 6%
\end{bmatrix}%
.
\end{equation*}%
From Lemma \ref{caso 3} we compute a nonnegative matrix $A_{2}$ with
spectrum $\Lambda _{2}=\{c=6,-2+3i,-2-3i\}$ and digoanal entries $\{2,0,0\}$%
: 
\begin{equation*}
A_{2}=%
\begin{bmatrix}
2 & 0 & 4 \\ 
\frac{25}{6} & 0 & \frac{11}{6} \\ 
0 & 6 & 0%
\end{bmatrix}%
.
\end{equation*}%
Finally from Lemma \ref{Smigoc} with 
\begin{equation*}
\mathbf{a}=[8,8,8,6]^{T},\ \mathbf{b}^{T}=[0,2,4,4],
\end{equation*}%
and 
\begin{equation*}
\mathbf{s}=\mathbf{e},\mathbf{t}^{T}=\bigg[\frac{25}{73},\frac{24}{73},\frac{%
24}{73}\bigg],
\end{equation*}%
we have that 
\begin{equation*}
B=%
\begin{bmatrix}
0 & 2 & 4 & 2 & \frac{200}{73} & \frac{192}{73} & \frac{192}{73} \\ 
1 & 1 & 4 & 2 & \frac{200}{73} & \frac{192}{73} & \frac{192}{73} \\ 
2 & 2 & 2 & 2 & \frac{200}{73} & \frac{192}{73} & \frac{192}{73} \\ 
4 & 2 & 4 & 0 & \frac{150}{73} & \frac{144}{73} & \frac{144}{73} \\ 
0 & 2 & 4 & 4 & 2 & 0 & 4 \\ 
0 & 2 & 4 & 4 & \frac{25}{6} & 0 & \frac{11}{6} \\ 
0 & 2 & 4 & 4 & 0 & 6 & 0%
\end{bmatrix}%
\end{equation*}%
is nonnegative with spectrum $\Lambda $ and diagonal entries $%
\{0,1,2,0,2,0,0\}.$ Observe that we may obtain the diagonal entries in any
desired order.
\end{example}

\end{document}